\newtheorem{thm}{Theorem}[section]
\newtheorem{lem}[thm]{Lemma}
\newtheorem{algth}[thm]{Algorithm}
\newtheorem{prop}[thm]{Proposition}
\newtheorem{defn}[thm]{Definition}
\newtheorem{rem}[thm]{Remark}
\newcommand{\R}{\mathbb{R}}
\newcommand{\N}{\mathbb{N}}
\begin{document}

\title{A subgradient method for  equilibrium problems  involving quasiconvex bifunctions\footnote{This paper is supported by the NAFOSTED.}}                      

\author{ Le Hai Yen\footnote{Institute of Mathematics, VAST; Email: lhyen@math.ac.vn}  and Le Dung Muu\footnote{Thang Long University and Institute of Mathematics, VAST;  Email: ldmuu@math.ac.vn}}

%\title{A subgradient method for  equilibrium problems\\ involving quasiconvex bifunctions}
%\author{ Le Hai Yen\footnote{Institute of Mathematics, VAST}  and Le Dung Muu\footnote{Thang Long University and Institute of Mathematics, VAST}}

\maketitle
\begin{abstract}
In this paper we propose a subgradient algorithm for solving  the equilibrium problem where the bifunction may be quasiconvex with respect to the second variable. The convergence of the algorithm
is investigated. A numerical example for a generalized variational inequality problem is provided to demonstrate the behavior of the algorithm.
\end{abstract}

\textbf{Keywords:} Equilibria, Subgradient method, Quasiconvexity.

\section{Introduction}
In this paper we consider the equilibrium problem defined by the  Nikaido-Isoda-Fan inequality that is given as
$$
\textnormal{Find } x^*\in C \textnormal{ such that } f(x^*,y) \geq 0 \quad \forall y\in C, \eqno (EP)
$$
where $C$ is a closed convex set in $\R^n$, $f:\R^n \times \R^n \rightarrow \R\cup \{+\infty\}$ is a bifunction such that $f(x,x)=0$ for every $ (x,x) \in C\times C \subset \textnormal{dom } f$.
The interest of this problem is that it unifies many important problems such as the Kakutani fixed point, variational inequality, optimization, Nash equilibria and some other problems  \cite{Bi2018, Bl1994, Ko2001, Mu1992} in a convenient way. The inequality in (EP)  first was used in \cite{Ni1955} for a convex game model. The first result for solution existence of (EP) has been obtained by Ky Fan in \cite{Fa1972}, where the bifunction $f$ can be quasiconvex with respect to the second argument. After the appearance of the paper by Blum-Oettli \cite{Bl1994}, the problem (EP) is attracted much  attention of many authors, and some solution approaches have been developed for solving Problem (EP) see e.g.  the  monographs \cite{Bi2018, Ko2001}. A basic method  for solving  some classes of Problem (EP) is subgradient (or auxiliary subproblem) one \cite{MQ2009, QMH2008}, where at each iteration $k$, having the iterate $x^k$, the next iterate $x^{k+1}$ is obtained by solving the subproblem
$\min \{ f(x^k,y) + \frac{1}{2\rho_k} \|y-x^k\|^2: y \in C\}$, with $\rho_k >0$. This subproblem is a strongly convex program whenever $f(x^k,.)$ is convex. Since in the case of variational inequality, where $f(x,y) := \langle F(x),y-x\rangle$, the iterate $x^{k+1}= P_C(x^k-\frac{1}{2\rho_k}F(x^k))$, the gradient method  can be considered as an extension of   the projection one commonly used in non smooth convex optimization \cite{So1985} as well as in  the variational inequality \cite{Fa2003}. The projection algorithms for paramonotone equilibrium problems with $f$ being convex with respect to second variable  can be found in  \cite{AM2014, Sa2011,  Th2017}.     Note that when $f(x,.)$ is quasiconvex the subgradient method fails to apply because of the fact that the objective function    $f(x^k,y) + \frac{1}{2\rho_k} \|y-x^k\|^2$, in general,  is neither convex nor quasiconvex. To our best knowledge there is no algorithm for  equilibrium problems  where the bifunction $f$ is quasiconvex with respect to the second variable.

In this paper we propose a projection algorithm for solving Problem (EP), where the bifunction may be quasiconvex with respect to the second variable. In order to deal with quasiconvexity we employ the subdifferential for quasiconvex function first introduced in \cite{GP1973}, see also  \cite{Pe1998, Pe2000} for its properties and calculus rules. The subdifferential of a quasiconvex function has been used by some authors for nonsmooth quasiconvex optimization see. e.g. \cite{Ko2003, Ki2001, So1985}, and for quasiconvex feasibility problems \cite{Ce2005,Ni2016}.

             \section{Subdifferentials of quasi-convex functions}
First of all, let us recall the well known definition of quasiconvex functions, see e.g. \cite{Ma1969}

\begin{defn}
A function $\varphi:\R^n \rightarrow \R \cup \{+\infty\}$ is called quasiconvex on a convex subset $C$ of $ \R^n$ if and only if for every $x,y \in C$ and $\lambda \in [0,1]$, one has
\begin{equation}
\varphi[(1-\lambda)x +\lambda y] \leq \max[\varphi(x), \varphi(y)].\label{eq1}
\end{equation}
\end{defn}
It is easy to see that $\varphi$ is quasiconvex on $C$   if and only if the level set
\begin{equation}
L_{\varphi}(\alpha):=\{ x\in C: \quad \varphi(x)<\alpha\}.\label{eq2}
\end{equation}
is convex for any $\alpha\in \R$.

We consider the following  Greenberg-Pierskalla subdifferential  introduced in \cite{GP1973}
%\begin{eqnarray}
%\partial^{GP} \varphi(x)=\{ g\in \R^n: \langle g, y-x\rangle <0 \quad \forall y \in L_\varphi(0)\}.\\
%\overline{\partial}^{GP} \varphi(x)=\{ g\in \R^n: \langle g,y-x \rangle \leq 0 \quad\forall y \in L_\varphi(0)\}.
%\end{eqnarray}
\begin{eqnarray}
\partial^{GP} \varphi(x):=\{ g\in \R^n: \langle g, y-x\rangle \geq 0 \quad \Rightarrow \varphi(y) \geq \varphi (x)\}.
\end{eqnarray}

A variation of the GP-subdifferential is the star-sudifferential that is defined as
\begin{equation}
\partial^*\varphi(x):=
\begin{cases}\{ g\in \R^n: \langle g, y-x\rangle > 0 \Rightarrow \varphi(y) \geq \varphi(x)\}& \textnormal{ if }\  x\not\in D_*\\
\R^n & \textnormal{ if } \ x\in D_*, \nonumber \\
\end{cases}
\end{equation}
where $D_*$ is the set of minimizers of $\varphi$ on $\R^n$.
If $\varphi$ is continuous on $\R^n$, then $\partial^{GP}\varphi(x) = \partial^*\varphi (x)$ (\cite{Pe2000}).

These subdifferentials are also called quasi-subdifferentials or the normal-subdifferentials.  Some calculus rules and optimality conditions   for these subdifferentials have been studied in \cite{Pe1998,Pe2000}, among them the following results will be used in the next section.

% Two important reasons why we use these type of subdifferentials are:
%\begin{itemize}
%\item They are cones and quite large. Moreover, they are nonempty if $f$ is upper semi-continuous (\textit{i.e.} $f(x)=\lim_{\epsilon \downarrow 0} \sup_{B(x,\epsilon)} f$ for all $x\in dom f$) and  quasiconvex (see Lemma \ref{lem1}).
%\item The necessary and sufficient conditions for optimality (see Lemma \ref{lem2}).
%\end{itemize}

\begin{lem}(\cite{Ki2001}, \cite{Pe1998})
\label{lem1} Assume that $\varphi:\R^n \rightarrow \R \cup \{+\infty\}$ is upper semicontinuous and quasiconvex on $dom \varphi$. Then
\begin{equation}
\partial^{GP} \varphi(x) \not= \emptyset \quad \forall x\in dom \varphi,
\end{equation}
and
\begin{equation}
\partial^{*} \varphi(x)= \partial^{GP} \varphi(x) \cup \{0\}.
\end{equation}

\end{lem}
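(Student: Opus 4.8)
The plan is to prove the two assertions separately, relying throughout on the level-set description of quasiconvexity together with one structural consequence of upper semicontinuity: for every $\alpha\in\R$ the strict sublevel set $\{y:\varphi(y)<\alpha\}$ is \emph{open}, being the complement of the closed set $\{y:\varphi(y)\geq\alpha\}$, and by quasiconvexity it is also \emph{convex}. The combination ``open and convex'' is precisely what makes a \emph{strict} separation available, and this is the engine behind both parts. Throughout I take $x\in\mathrm{dom}\,\varphi$, so that $\varphi(x)$ is finite and all comparisons are meaningful.

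First I would establish $\partial^{GP}\varphi(x)\neq\emptyset$. If $x$ is a global minimizer of $\varphi$, then $\varphi(y)\geq\varphi(x)$ for all $y$, so the defining implication of $\partial^{GP}\varphi(x)$ holds vacuously for every $g$; in particular $0\in\partial^{GP}\varphi(x)$. Otherwise the set $S:=\{y:\varphi(y)<\varphi(x)\}$ is nonempty, convex and open, and $x\notin S$. Applying the separation theorem to the convex set $S$ and the point $x$ produces a $g\neq 0$ with $\langle g,y-x\rangle\leq 0$ for all $y\in S$; here the openness of $S$ upgrades this to the strict inequality $\langle g,y-x\rangle<0$ on $S$. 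Reading this contrapositively — $\langle g,y-x\rangle\geq 0$ forces $y\notin S$, hence $\varphi(y)\geq\varphi(x)$ — shows $g\in\partial^{GP}\varphi(x)$, so the set is nonempty.

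For the identity $\partial^*\varphi(x)=\partial^{GP}\varphi(x)\cup\{0\}$ I would split on whether $x\in D_*$. If $x\in D_*$, both sides equal $\R^n$: the star-subdifferential is $\R^n$ by definition, while the $\partial^{GP}$-implication again holds vacuously for every $g$, so $\partial^{GP}\varphi(x)=\R^n$ as well. If $x\notin D_*$, the inclusion $\partial^{GP}\varphi(x)\cup\{0\}\subseteq\partial^*\varphi(x)$ is immediate, since the hypothesis $\langle g,y-x\rangle>0$ is more restrictive than $\langle g,y-x\rangle\geq 0$, and $g=0$ satisfies the star-condition vacuously. The whole content therefore lies in the reverse inclusion.

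The main obstacle is exactly this reverse inclusion, namely upgrading the strict-inequality implication defining $\partial^*$ to the non-strict one defining $\partial^{GP}$, i.e. covering the boundary case $\langle g,y-x\rangle=0$. Given a nonzero $g\in\partial^*\varphi(x)$ and a point $y$ with $\langle g,y-x\rangle=0$, I would perturb along $g$: setting $y_\epsilon:=y+\epsilon g$ gives $\langle g,y_\epsilon-x\rangle=\epsilon\|g\|^2>0$ for $\epsilon>0$, so the star-condition forces $\varphi(y_\epsilon)\geq\varphi(x)$. Letting $\epsilon\downarrow 0$ and invoking upper semicontinuity yields $\varphi(y)\geq\limsup_{\epsilon\downarrow 0}\varphi(y_\epsilon)\geq\varphi(x)$, which is precisely what the $\partial^{GP}$-condition demands on the boundary. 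This is the step where upper semicontinuity is indispensable: without it the boundary values could drop below $\varphi(x)$ and the two subdifferentials would genuinely differ. Combining this boundary case with the already-available strict case gives $g\in\partial^{GP}\varphi(x)$, which establishes the reverse inclusion and completes the lemma.
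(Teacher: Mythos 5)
Your argument is correct. Note, however, that the paper itself gives no proof of this lemma: it is stated as a quoted result with citations to Kiwiel (2001) and Penot (1998), so there is no in-paper argument to compare against. What you wrote is essentially the standard proof from those references: openness of the strict sublevel set $\{y:\varphi(y)<\varphi(x)\}$ (from upper semicontinuity) combined with its convexity (from quasiconvexity) yields a strictly separating $g\neq 0$, hence nonemptiness of $\partial^{GP}\varphi(x)$; and the perturbation $y_\epsilon=y+\epsilon g$ together with upper semicontinuity handles the boundary case $\langle g,y-x\rangle=0$ in the reverse inclusion $\partial^*\varphi(x)\subseteq\partial^{GP}\varphi(x)\cup\{0\}$, which is indeed the only nontrivial step. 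One cosmetic remark: in the global-minimizer case the defining implication of $\partial^{GP}\varphi(x)$ does not hold \emph{vacuously} --- its hypothesis is certainly satisfiable --- but rather because its conclusion $\varphi(y)\geq\varphi(x)$ holds for every $y$; this does not affect the validity of the argument.
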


\begin{lem}(\cite{Ki2001}, \cite{Pe2000})
\label{lem2} \begin{equation}
0\in \partial^{GP} \varphi(x) \Leftrightarrow \partial^{GP} \varphi(x)=\R^n \Leftrightarrow x \in \textnormal{argmin }_{y \in \R^n} \varphi (y).
\end{equation}
\end{lem}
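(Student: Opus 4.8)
The plan is to prove the two equivalences by a short cycle of implications, exploiting the fact that everything here follows directly from the defining property of $\partial^{GP}\varphi(x)$ and does not require the quasiconvexity or semicontinuity hypotheses of Lemma \ref{lem1}. The single key observation is that substituting the trivial subgradient $g=0$ into the definition collapses its defining implication into an unconditional statement about global minimality. I would therefore first establish $0\in\partial^{GP}\varphi(x)\Leftrightarrow x\in\textnormal{argmin}\,\varphi$, then close the loop through $\partial^{GP}\varphi(x)=\R^n$.

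First I would handle $0\in\partial^{GP}\varphi(x)\Leftrightarrow x\in\textnormal{argmin}_{y\in\R^n}\varphi(y)$. Reading the definition with $g=0$, the premise $\langle 0, y-x\rangle\geq 0$ reduces to $0\geq 0$, which holds for every $y\in\R^n$. Hence $0\in\partial^{GP}\varphi(x)$ says precisely that the conclusion $\varphi(y)\geq\varphi(x)$ must hold for all $y\in\R^n$, i.e. that $x$ minimizes $\varphi$ on $\R^n$. Conversely, if $x\in\textnormal{argmin}\,\varphi$ then $\varphi(y)\geq\varphi(x)$ for all $y$, so in particular the (vacuously guarded) implication defining membership of $g=0$ is satisfied, giving $0\in\partial^{GP}\varphi(x)$. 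This already settles the equivalence of the first and third conditions.

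Next I would show $x\in\textnormal{argmin}\,\varphi\Rightarrow\partial^{GP}\varphi(x)=\R^n$. If $x$ is a global minimizer, then $\varphi(y)\geq\varphi(x)$ holds for every $y\in\R^n$ \emph{unconditionally}; consequently, for an arbitrary $g\in\R^n$ the implication $\langle g, y-x\rangle\geq 0\Rightarrow\varphi(y)\geq\varphi(x)$ is trivially true because its conclusion always holds. Thus every $g\in\R^n$ lies in $\partial^{GP}\varphi(x)$, so $\partial^{GP}\varphi(x)=\R^n$. The remaining implication $\partial^{GP}\varphi(x)=\R^n\Rightarrow 0\in\partial^{GP}\varphi(x)$ is immediate since $0\in\R^n$, and this completes the cycle linking all three conditions.

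I do not expect a genuine obstacle here; the result is purely definitional once the logical structure of $\partial^{GP}$ is read correctly. The only point requiring care is the direction of the implication in the definition: membership of $g$ asserts that $\langle g, y-x\rangle\geq 0$ \emph{forces} $\varphi(y)\geq\varphi(x)$, quantified over all $y$, and it is the vacuous/trivial satisfaction of this guarded implication (either because the premise is always true for $g=0$, or because the conclusion is always true at a minimizer) that drives both nontrivial implications. It is worth remarking that, unlike Lemma \ref{lem1}, no regularity of $\varphi$ is used.
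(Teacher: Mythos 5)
Your proof is correct: the paper states this lemma without proof, citing Kiwiel and Penot--Zalinescu, and your purely definitional cycle (reading the guarded implication with $g=0$ to get the equivalence with global minimality, then noting the implication is trivially satisfied for every $g$ at a minimizer) is exactly the standard argument those references use. Your closing remark that no quasiconvexity or semicontinuity is needed here, in contrast to Lemma \ref{lem1}, is also accurate.
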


The following result follows from Lemma 4 in \cite{Ki2001}, which can be used  to find the subdifferential of a fractional quasiconvex function.
\begin{lem}(\cite{Ki2001})
\label{lem3}
Suppose $\varphi(x)=a(x)/b(x)$ for all $x\in dom \varphi$, where $a$ is a convex function, $b$ is finite and positive on $dom \varphi$, $dom \varphi$ is convex and one of the following conditions holds
\begin{enumerate}
\item[(a)] $b$ is affine;
\item[(b)] $a$ is nonnegative on $dom \varphi$ and $b$ is concave;
%\item[(c)] $a$ is nonpositive on $dom \varphi$ and $b$ is convex.
\end{enumerate}
Then $\varphi$ is quasiconvex and $\partial (a-\alpha b) (x)$ is a subset of $\partial^{GP}\varphi(x)$ for  $\alpha=\varphi(x)$, where $\partial$ stands for the subdifferential in the sense of convex analysis.
\end{lem}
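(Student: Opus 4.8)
The plan is to establish the two assertions separately, exploiting in both the elementary observation that, since $b$ is positive on $\textnormal{dom}\,\varphi$, the inequality $\varphi(y)=a(y)/b(y)\ge\beta$ is equivalent to $a(y)-\beta b(y)\ge 0$, and similarly with strict inequalities. This converts every statement about the quotient $\varphi$ into a statement about the auxiliary function $h_\beta:=a-\beta b$, which is convex under either hypothesis. Indeed, in case (a) the map $\beta b$ is affine for every real $\beta$, so $h_\beta=a-\beta b$ is convex; in case (b) the relevant values of $\beta$ turn out to be nonnegative, and then $\beta b$ is concave, so $-\beta b$ and hence $h_\beta$ is convex.

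For quasiconvexity I would use the level-set characterization $(\ref{eq2})$. For fixed $\alpha$ the equivalence above gives $L_\varphi(\alpha)=\{x\in\textnormal{dom}\,\varphi: h_\alpha(x)<0\}$, a strict sublevel set of the convex function $h_\alpha$, which is therefore convex. In case (b) the only point requiring care is the sign of $\alpha$: when $\alpha\le 0$ the set $L_\varphi(\alpha)$ is empty, because $a\ge 0$ and $b>0$ force $\varphi\ge 0$, while for $\alpha>0$ the function $h_\alpha$ is convex as noted. Hence $L_\varphi(\alpha)$ is convex for every $\alpha$, so $\varphi$ is quasiconvex.

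For the inclusion, fix $x\in\textnormal{dom}\,\varphi$ and set $\alpha=\varphi(x)$. The key computation is that $h_\alpha(x)=a(x)-\varphi(x)b(x)=0$. Let $g\in\partial(a-\alpha b)(x)=\partial h_\alpha(x)$. The convex subgradient inequality then reads $h_\alpha(y)\ge h_\alpha(x)+\langle g,y-x\rangle=\langle g,y-x\rangle$ for all $y$. Now suppose $\langle g,y-x\rangle\ge 0$; if $y\notin\textnormal{dom}\,\varphi$ then $\varphi(y)=+\infty\ge\varphi(x)$ trivially, and if $y\in\textnormal{dom}\,\varphi$ then $h_\alpha(y)\ge 0$, i.e. $a(y)\ge\alpha b(y)$, whence dividing by $b(y)>0$ yields $\varphi(y)\ge\alpha=\varphi(x)$. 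This is exactly the defining implication of $\partial^{GP}\varphi(x)$, so $g\in\partial^{GP}\varphi(x)$, which proves $\partial(a-\alpha b)(x)\subseteq\partial^{GP}\varphi(x)$.

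The individual steps are routine; the main thing to get right is the bookkeeping on the sign of the scalar multiplying $b$. In case (b) convexity of $h_\alpha$ genuinely requires $\alpha\ge 0$, which is precisely why the nonnegativity of $a$ is assumed, and the empty-level-set observation is needed to dispose of the remaining values of $\alpha$. Case (a) avoids this issue entirely, since an affine $b$ makes $h_\beta$ convex for every $\beta$ of either sign. Finally, the restriction of the GP implication to $y\in\textnormal{dom}\,\varphi$ is harmless, as $\varphi\equiv+\infty$ off its domain.
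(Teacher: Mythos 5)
Your proof is correct. The paper gives no proof of Lemma \ref{lem3} at all---it is simply quoted as a consequence of Lemma 4 in \cite{Ki2001}---and your argument via the auxiliary convex function $h_\alpha=a-\alpha b$ (with the sign check $\alpha=\varphi(x)\ge 0$ needed for convexity of $h_\alpha$ in case (b), and the empty-level-set remark for $\alpha\le 0$) is exactly the standard reduction behind that cited result, so your write-up amounts to a complete, self-contained justification of the statement.
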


\section{Algorithm and its convergence analysis}

In this section we propose an algorithm for solving Problem (EP), by using the star-sudifferential with respect to the second variable of  the bifunction $f$.
 As usual, we suppose that  $C$ is a nonempty closed convex subset in $\R^n$, $f:\R^n \times \R^n \rightarrow R\cup\{+\infty\}$ is a bifunction  satisfying $f(x,x)=0$ and  $C\times C \subset \textnormal{dom } f$. The solution set of Problem (EP) is denoted by $S(EP)$.

\vskip1cm
\textbf{Assumptions}
\begin{enumerate}
\item[(A1)] For every $x\in C$, the function $f(x,.)$ is quasiconvex and $f(.,.)$ is upper semi continuous on an open set containing $C\times C$;
\item[(A2)] The bifunction $f$ is pseudomonotone on $C$, that is
\begin{equation}
f(x,y)\geq 0 \Rightarrow f(y,x)\leq 0 \quad \forall x\in C, y\in C,\nonumber
\end{equation}
and paramonotone on $C$ with respect to $S(EP)$, that is
\begin{equation}
x\in S(EP), y\in C \textnormal{ and } f(x,y)=f(y,x)=0 \Rightarrow y\in S(EP).\nonumber
\end{equation}
\item[(A3)] The solution set $S(EP)$ is nonempty.
%\item[(A3)] The Greenberg-Pierskalla subdifferential of $f(x,.)$ at $x$
%\begin{equation}
%\partial^*_2 f(x,x)=\{g\in \R^n |\langle g,y-x \rangle <0 \quad \forall y\in S^2_f(x)\}\nonumber
%\end{equation}
%is bounded on any bounded set.
\end{enumerate}

Paramonotonicity of  bifunctions has been  used for equilibrium as well as split equilibrium problems in some papers see e.g. \cite{AM2014, Sa2011,Th2017,Ye2019}. Various properties of paramonotonicity can be found, for example   in \cite{Ju1998}.

 For simplicity of notation. let us denote the sublevel set of the function $f(x,.)$ with value $0$ and the star subdifferential of $f(x,.)$ at $x$ as follows
\begin{eqnarray}
L_f(x)&:=& \{ y: f(x,y)< f(x,x)=0\},\nonumber\\
\partial^*_2 f(x,x)&:=&\{g\in \R^n |\langle g,y-x \rangle <0  \quad\forall y\in L_f(x)\}.\nonumber
\end{eqnarray}
The projection algorithm below can be considered as an extension of the one in \cite{Sa2011} to equilibrium problem (EP), where $f$ is quasiconvex with respect to the second variable.
\vskip0.5cm
\begin{algth}(The Normal-subgradient method)
\label{algth1}
 Take a real sequence $\{\alpha_k\}$ satisfying  the following conditions
\begin{eqnarray}
 &\quad \alpha_k >0 \quad \forall k\in \N,&\nonumber\\
&\sum_{k=1}^{\infty} \alpha_k=+\infty, \quad \sum_{k=1}^{\infty} \alpha_k^2 <+\infty.&\nonumber
\end{eqnarray}
\\\textbf{Step 0:} $x^0\in C$, $k=0$.
\\\textbf{Step k:} $x^k \in C$.

Take $g^k\in \partial_2^* f(x^k,x^k)$.

If $g^k = 0$,  \textbf{stop}: $x^k$ is a solution.

Else normalize $g^k$ to obtain   $\|g^k\|=1.$

Compute
\begin{equation}
x^{k+1} =P_C(x^k -\alpha_k g^k).\nonumber
\end{equation}

If $x^{k+1}=x^k$ then \textbf{stop}. %whenever $f(x^k,.)$ is continuous,

Else update $k\longleftarrow k+1$.

\end{algth}
\begin{rem}
\label{rem0}
At each iteration $k$, in order to  check that  the iterate $x^k$ is a solution or not one can solve   the programming problem
  $\min_{y\in C} f(x^k,y)$.  In general solving this problem  is costly, however  in some special cases, for example,  when $C$ is a polyhedral convex set and the function $f(x^k,.)$ is affine fractional, this program can be solved efficiently by linear programming methods.
\end{rem}
The following remark ensures the validity of the algorithm.
\begin{rem}
\label{rem1a}

(i)   Since the star-subdifferential is a cone, one can always normalize its nonzero element to obtain an unit vector in the subdifferential.

(ii) If Algorithm \ref{algth1} generates a finite sequence, the last point is a solution of (EP).
Indeed,  if $0\in \partial_2^* f(x^k,x^k)$  then by Lemma \ref{lem2}, we have
\begin{equation}
x^k \in \textnormal{argmin}_{y\in \R^n} f(x^k,y),\nonumber
\end{equation}
which implies  $f(x^k,y)\geq 0$ for any $y\in C$.

If $x^{k+1}=x^k$ then $x^k= P_C(x^k-\alpha_kg^k)$. Hence,
$$\langle -\alpha_kg^k, y-x^k\rangle \leq 0 \quad \forall y\in C,$$
or
$$\langle g^k, y-x^k\rangle \geq 0 \quad \forall y\in C.$$
 Since $g^k \in \partial_2^* f(x^k,x^k)= \partial_2^{GP} f(x^k,x^k)$, we have   $f(x^k,y) \geq  f(x^k,x^k) \geq 0$ for every $y\in C$.
\end{rem}

%\begin{rem}

 %If $0\not\in \partial_2^* f(x^k,x^k)$, by Lemma \ref{lem1}, we know that $\partial_2^* f(x^k,x^k)$ is nonempty. Hence, we %can find a nonzero vector in $\partial_2^* f(x^k,x^k)$. Moreover, if  $g\in\partial_2^* f(x^k,x^k)$  then $\lambda g \in %\partial_2^* f(x^k,x^k)$ for $\lambda \geq 0$. So we can always take $g^k\in \partial_2^* f(x^k,x^k)$ such that $\|g^k\|=1.$

%\end{rem}

For convergence of the algorithm we need the following results that has been proved in \cite{Sa2011} for the case the bifunction $f$ is convex in the second variable.
\begin{lem}
\label{lem4}
The following inequality holds true for every $k$,
\begin{equation}
\|x^{k+1}-x^{k}\| \leq \alpha_k.
\end{equation}
\end{lem}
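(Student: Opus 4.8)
The plan is to reduce the inequality to the single most basic property of the Euclidean projection onto a closed convex set, namely that $P_C$ is nonexpansive. The key preliminary observation is that the current iterate $x^k$ already lies in $C$ (this is maintained by the algorithm, since $x^0\in C$ and each subsequent iterate is produced by a projection onto $C$), so $x^k$ is its own projection: $P_C(x^k)=x^k$. This lets me rewrite $x^k$ inside the norm as $P_C(x^k)$ and thereby express $\|x^{k+1}-x^k\|$ as the distance between the projections of the two points $x^k-\alpha_k g^k$ and $x^k$.

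Concretely, I would first recall the nonexpansiveness estimate
\begin{equation}
\|P_C(u)-P_C(v)\|\leq \|u-v\|\qquad \forall u,v\in\R^n,\nonumber
\end{equation}
which is standard in convex analysis and follows from the firm nonexpansiveness of the projection. Applying it with $u=x^k-\alpha_k g^k$ and $v=x^k$, and using $P_C(x^k)=x^k$, gives
\begin{equation}
\|x^{k+1}-x^k\|=\|P_C(x^k-\alpha_k g^k)-P_C(x^k)\|\leq \|(x^k-\alpha_k g^k)-x^k\|=\alpha_k\|g^k\|.\nonumber
\end{equation}

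Finally I would invoke the normalization step of Algorithm \ref{algth1}: whenever $g^k\neq 0$ it is rescaled so that $\|g^k\|=1$, which is legitimate because the star-subdifferential is a cone (Remark \ref{rem1a}(i)). Substituting $\|g^k\|=1$ into the previous display yields $\|x^{k+1}-x^k\|\leq\alpha_k$, as claimed. I do not anticipate any genuine obstacle here; the only point deserving care is verifying the two facts I rely on, namely that $x^k\in C$ at every step (so that the projection fixes it) and that the normalization is in force, both of which are guaranteed by the structure of the algorithm.
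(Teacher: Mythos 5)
Your proof is correct, and it takes a slightly different route from the paper's. The paper works directly from the variational characterization of the projection, writing $\langle x^{k+1}-x^k+\alpha_k g^k,\, y-x^{k+1}\rangle \geq 0$ for all $y\in C$, substituting $y=x^k$ (which, like your argument, requires $x^k\in C$), and then applying the Cauchy--Schwarz inequality to $\|x^{k+1}-x^k\|^2 \leq \alpha_k\langle g^k, x^k-x^{k+1}\rangle$ before cancelling a factor of $\|x^{k+1}-x^k\|$. You instead package the same underlying geometry into the nonexpansiveness of $P_C$ combined with $P_C(x^k)=x^k$, which gives the bound in one line. The two arguments rest on the same facts ($x^k\in C$ and $\|g^k\|=1$ after normalization), and nonexpansiveness is itself a consequence of the obtuse-angle inequality the paper uses; your version is shorter and cleaner, while the paper's version has the minor advantage that the intermediate inequality $\langle x^{k}-x^{k+1}, z-x^{k+1}\rangle \leq \alpha_k\langle g^k, z-x^{k+1}\rangle$ is exactly the tool it reuses immediately afterwards in the proof of Proposition \ref{prop1}. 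No gap in your reasoning.
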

\begin{proof}
Since $x^{k+1}=P_C(x^k -\alpha_k g^k)$,
\begin{equation}
\langle x^{k+1} -x^k+\alpha_kg^k, y -x^{k+1} \rangle\geq 0 \quad \forall y \in C.\nonumber
\end{equation}
By subtituting $y=x^k$, we obtain
\begin{eqnarray}
\|x^{k+1} -x^k\|^2 &\leq & \alpha_k\langle g^k, x^k -x^{k+1} \rangle\nonumber\\
&\leq& \alpha_k \|g^k\|\|x^{k+1}-x^k\| \nonumber\\
&=& \alpha_k \|x^{k+1}-x^k\|.\nonumber
\end{eqnarray}

Therefore, $$\|x^{k+1}-x^k\| \leq \alpha_k.$$
\end{proof}

\begin{prop}
\label{prop1}
For every  $z\in C$ and $k$, the following inequality holds
\begin{equation}
\|x^{k+1}-z\|^2 \leq \|x^k -z\|^2 +2\alpha_k \langle g^k, z-x^k\rangle +\alpha_k^2.\label{eq11}
\end{equation}

\end{prop}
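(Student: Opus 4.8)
The plan is to derive the inequality directly from the nonexpansiveness of the metric projection $P_C$, which is the standard tool for estimates of this type in projection/subgradient schemes. Since $z\in C$, we have $z=P_C(z)$, and because $P_C$ is nonexpansive on the closed convex set $C$,
\begin{equation}
\|x^{k+1}-z\| = \|P_C(x^k-\alpha_k g^k)-P_C(z)\| \leq \|(x^k-\alpha_k g^k)-z\|.\nonumber
\end{equation}
Squaring both sides and expanding the right-hand side then yields all the desired terms.

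Concretely, the second step is to write
\begin{equation}
\|x^{k+1}-z\|^2 \leq \|x^k-z\|^2 - 2\alpha_k\langle g^k, x^k-z\rangle + \alpha_k^2\|g^k\|^2,\nonumber
\end{equation}
and then to invoke the normalization $\|g^k\|=1$ performed in Algorithm \ref{algth1}, so that the last term becomes exactly $\alpha_k^2$. Rewriting the cross term as $-\langle g^k, x^k-z\rangle = \langle g^k, z-x^k\rangle$ gives precisely \reff{eq11}. Note that this route uses the hypothesis $z\in C$ in an essential way, namely to guarantee $z=P_C(z)$ so that nonexpansiveness applies with $z$ as a fixed point of $P_C$.

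Alternatively, and more in the spirit of the proof of Lemma \ref{lem4}, one could start from the variational characterization of the projection, $\langle x^{k+1}-x^k+\alpha_k g^k,\ y-x^{k+1}\rangle\geq 0$ for all $y\in C$, substitute $y=z$, and combine it with the expansion $\|x^{k+1}-z\|^2=\|x^k-z\|^2+2\langle x^{k+1}-x^k,\ x^k-z\rangle+\|x^{k+1}-x^k\|^2$ together with the bound $\|x^{k+1}-x^k\|\leq\alpha_k$ from Lemma \ref{lem4}. This second path involves a slightly longer rearrangement but keeps the argument self-contained within the variational-inequality formulation already used in the preceding lemma. In either case there is no genuine obstacle: the only points requiring care are the use of $\|g^k\|=1$ to collapse the quadratic term to $\alpha_k^2$ and the sign bookkeeping in the inner-product term; I would therefore adopt the nonexpansiveness route as the cleaner of the two.
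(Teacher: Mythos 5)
Your proof is correct, and it takes a genuinely different route from the paper's. The paper starts from the identity $\|x^{k+1}-z\|^2=\|x^k-z\|^2-\|x^{k+1}-x^k\|^2+2\langle x^k-x^{k+1},z-x^{k+1}\rangle$, discards the negative term, invokes the variational characterization of the projection to bound $\langle x^{k}-x^{k+1},z-x^{k+1}\rangle\leq\alpha_k\langle g^k,z-x^{k+1}\rangle$, and then controls the residual $2\alpha_k\langle g^k,x^k-x^{k+1}\rangle$ via Cauchy--Schwarz and Lemma \ref{lem4}. Your nonexpansiveness argument --- $\|x^{k+1}-z\|=\|P_C(x^k-\alpha_k g^k)-P_C(z)\|\leq\|x^k-\alpha_k g^k-z\|$, then expand the square and use $\|g^k\|=1$ --- is shorter, and it is actually sharper: because the paper drops $-\|x^{k+1}-x^k\|^2$ before bounding the cross term, its displayed chain terminates in $+2\alpha_k^2$ rather than the $+\alpha_k^2$ asserted in the statement of Proposition \ref{prop1}, whereas your expansion yields $+\alpha_k^2\|g^k\|^2=+\alpha_k^2$ exactly as claimed. (The discrepancy is harmless for the convergence analysis since $\sum_k\alpha_k^2<+\infty$ absorbs any fixed constant, but your route is the one that literally proves the stated inequality.) Your secondary remark about the alternative path through the variational characterization is also accurate and essentially reconstructs the paper's argument.
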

\begin{proof}
Let $z\in C$, then we have
\begin{eqnarray}
&&\|x^{k+1}-z\|^2\nonumber\\&=&\|x^k-z\|^2 -\|x^{k+1}-x^k\|^2 +2\langle x^k-x^{k+1}, z-x^{k+1}\rangle\nonumber\\
&\leq& \|x^k-z\|^2 +2\langle x^k-x^{k+1}, z-x^{k+1}\rangle.\label{eq12}
\end{eqnarray}
Since $x^{k+1}=P_C(x^k -\alpha_k g^k)$ and $z\in C$,
\begin{eqnarray}
\langle x^{k} -x^{k+1}, z-x^{k+1}\rangle \leq \alpha_k \langle g^k, z-x^{k+1}\rangle.\label{eq13}
\end{eqnarray}
From (\ref{eq12}) and (\ref{eq13}),
\begin{eqnarray}
&&\|x^{k+1}-z\|^2 \nonumber\\&\leq & \|x^k-z\|^2 +2\alpha_k \langle g^k, z-x^{k+1}\rangle\nonumber\\
&=&\|x^k-z\|^2 +2\alpha_k \langle g^k, z-x^k\rangle\nonumber\\
&&+ 2\alpha_k \langle g^k, x^k-x^{k+1}\rangle.\label{eq14}
\end{eqnarray}
By Cauchy-Schwart inequality and the fact that $\|g^k\|=1$, we have
\begin{equation}
\langle g^k, x^k-x^{k+1}\rangle \leq \|x^k-x^{k+1}\|.
\nonumber\end{equation}
Since, by Lemma \ref{lem4},  $\|x^k-x^{k+1}\|\leq \alpha_k$, the inequality    (\ref{eq14}) becomes
\begin{equation}
\|x^{k+1}-z\|^2 \leq \|x^k -z\|^2 +2\alpha_k \langle g^k, z-x^k\rangle +2\alpha_k^2\nonumber
\end{equation}

\end{proof}

The following lemma is  an extension of Lemma 6  in \cite{Ki2001} to the diagonal subdifferential of a bifunction being quasiconvex in its second variable.
\begin{lem}
\label{lem5}
\begin{itemize}
\item[(a)] If $B(\overline{x}, \epsilon) \subset L_f(x^k)$ for some $\overline{x}\in \R^n$ and $\epsilon \geq 0$, then
\begin{equation}
\langle g^k, x^k-\overline{x}\rangle >\epsilon.\nonumber
\end{equation}
\item[(b)]$\liminf_{k \rightarrow \infty} \langle g^k, x^k-\overline{x} \rangle \leq 0 \quad \forall \overline{x} \in C $.
\end{itemize}
\end{lem}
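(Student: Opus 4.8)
The plan is to treat the two parts separately. Part (a) is a direct consequence of the defining property of the diagonal star-subdifferential, evaluated at a well-chosen boundary point, while part (b) follows from the Fej\'er-type estimate of Proposition \ref{prop1} combined with the two step-size conditions.

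For part (a), I would use that $g^k \in \partial_2^* f(x^k,x^k)$ means precisely $\langle g^k, y-x^k\rangle < 0$ for every $y \in L_f(x^k)$. Since $B(\overline{x},\epsilon)\subset L_f(x^k)$ and $\|g^k\|=1$, the specific point $y:=\overline{x}+\epsilon g^k$ satisfies $\|y-\overline{x}\|=\epsilon$, so it lies in $B(\overline{x},\epsilon)$ and hence in $L_f(x^k)$. Substituting this $y$ into the defining inequality and using $\|g^k\|^2=1$ gives
$$
0 > \langle g^k,(\overline{x}+\epsilon g^k)-x^k\rangle = \langle g^k, \overline{x}-x^k\rangle + \epsilon ,
$$
which rearranges to $\langle g^k, x^k-\overline{x}\rangle > \epsilon$, as claimed. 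The one point requiring care is that this worst-case boundary point $\overline{x}+\epsilon g^k$ must be admissible, i.e. the ball should be taken closed; this is exactly what converts the strict subdifferential inequality into the strict conclusion.

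For part (b), I would argue by contradiction. Suppose that for some $\overline{x}\in C$ one had $\liminf_{k\to\infty}\langle g^k, x^k-\overline{x}\rangle > 0$. Then there exist $\delta>0$ and an index $K$ with $\langle g^k, \overline{x}-x^k\rangle \leq -\delta$ for all $k\geq K$. Applying Proposition \ref{prop1} with $z=\overline{x}$ yields, for every such $k$,
$$
\|x^{k+1}-\overline{x}\|^2 \leq \|x^k-\overline{x}\|^2 - 2\delta\alpha_k + \alpha_k^2 .
$$
Telescoping this inequality from $K$ to $N$ gives
$$
\|x^{N+1}-\overline{x}\|^2 \leq \|x^K-\overline{x}\|^2 - 2\delta\sum_{k=K}^{N}\alpha_k + \sum_{k=K}^{N}\alpha_k^2 .
$$

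Finally I would let $N\to\infty$. By the hypotheses on the step sizes, $\sum_k\alpha_k=+\infty$ drives the middle term to $-\infty$, whereas $\sum_k\alpha_k^2<+\infty$ keeps the last term bounded; hence the right-hand side tends to $-\infty$, contradicting the nonnegativity of $\|x^{N+1}-\overline{x}\|^2$. This contradiction establishes $\liminf_{k\to\infty}\langle g^k, x^k-\overline{x}\rangle \leq 0$. The main obstacle, if any, is the step-size bookkeeping in part (b): the entire content lies in isolating the divergence of $\sum_k\alpha_k$ against the summability of $\sum_k\alpha_k^2$ after the telescoping, since this is the mechanism through which the algorithm's asymptotic behaviour enters; everything else is routine substitution.
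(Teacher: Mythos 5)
Your proposal is correct and follows essentially the same route as the paper: part (a) by plugging the boundary point $\overline{x}+\epsilon g^k$ of the ball into the defining inequality of $\partial_2^* f(x^k,x^k)$ and using $\|g^k\|=1$, and part (b) by contradiction via Proposition \ref{prop1} together with $\sum_k\alpha_k=+\infty$ and $\sum_k\alpha_k^2<+\infty$. Your write-up is in fact cleaner than the paper's (which contains sign/notation slips such as writing ``for $\epsilon>0$ small enough'' for a given $\epsilon$, and mixing $\xi$, $\epsilon$, $z$ and $\delta_k$ in part (b)), but the underlying argument is identical.
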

\begin{proof}
\begin{itemize}
\item[(a)] If $B(\overline{x}, \epsilon) \subset L_f(x^k)$, then $\overline{x}+\epsilon g^k \in L_f(x^k)$ for $\epsilon >0$ small enough. Since $g^k\in \partial_2^* f(x^k,x^k)$, we have
\begin{equation}
\langle g^k, \overline{x}+\epsilon g^k -x^k\rangle <0.\nonumber
\end{equation}
From $\|g^k\|=1$, it follows  that
\begin{equation}
\langle g^k, x^k-\overline{x}\rangle >\epsilon.\nonumber
\end{equation}
\item[(b)] By contradiction, we assume that there exist $\overline{x}\in C$, $\xi >0$ and $k_0$ such that for any $k\geq k_0$,
\begin{equation}
\langle g^k, x^k-\overline{x}\rangle \geq \xi > 0.\nonumber
\end{equation}
In view of  Proposition \ref{prop1}, we have
\begin{equation}
2\alpha_k \langle g^k, z-x^k\rangle \leq \|x^k -z\|^2 -\|x^{k+1}-z\|^2 +\delta_k.\nonumber
\end{equation}
By summing up, we obtain
\begin{eqnarray}
2\sum_{k=0}^\infty \alpha_k \epsilon &\leq & 2\sum_{k=0}^\infty \alpha_k\langle g^k, z-x^k\rangle\nonumber\\
&\leq &\|x^0-z\|^2 +\sum_{k=1}^{\infty} \alpha_k^2.\nonumber
\end{eqnarray}
Thus,
$$0<\epsilon \leq \frac{\|x^0-z\|^2 +\sum_{k=1}^{\infty} \alpha_k^2}{2\sum_{k=0}^\infty \alpha_k}.$$
This is a contradiction because of $\sum_{k=1}^{\infty} \alpha_k^2<+\infty$ and $\sum_{k=0}^\infty \alpha_k=+\infty$.
\end{itemize}
\end{proof}

Now, we can state the convergence  theorem.
\begin{thm}
Suppose that Algorithm \ref{algth1} does not terminate. Let $\{x^k\}$ be the infinite sequence generated by the  algorithm  and   $\{x^{k_q}\}$ be the subsequence of
$ \{x^k\}$ that  contains all iterates belonging to $S(EP)$.
Then  under Assumptions (A1), (A2), (A3), one has
\begin{itemize}
\item[(a)] If $\{x^{k_q}\}$ is infinite then $\lim_{k \rightarrow \infty} d(x^k, S(EP))=0;$
\item[(b)] If $\{x^{k_q}\}$ is finite then the sequence $\{x^k\}$ converges to a solution of the problem (EP).
\end{itemize}

\end{thm}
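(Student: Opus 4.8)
Here is how I would organize the argument.

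The plan rests on a single Fej\'er-type estimate valid at every non-solution iterate. Fix $z\in S(EP)$ and suppose $x^k\notin S(EP)$. Since $z\in S(EP)$ gives $f(z,x^k)\ge 0$, pseudomonotonicity in (A2) yields $f(x^k,z)\le 0$. If equality held we would have $f(x^k,z)=f(z,x^k)=0$, and paramonotonicity in (A2) would force $x^k\in S(EP)$, contrary to assumption; hence $f(x^k,z)<0$, i.e. $z\in L_f(x^k)$. By the defining inequality of $\partial_2^* f(x^k,x^k)$ (to which $g^k$ belongs, with $g^k\neq 0$), this gives $\langle g^k,z-x^k\rangle<0$. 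Substituting into Proposition \ref{prop1} produces the key inequality
\begin{equation}
\|x^{k+1}-z\|^2\le \|x^k-z\|^2+\alpha_k^2,\qquad x^k\notin S(EP),\ z\in S(EP). \label{eqfejer}
\end{equation}
This is the engine for both parts, and the case analysis above is where Assumption (A2) does its essential work.

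For part (a), assume $\{x^{k_q}\}$ is infinite. On each block $k_q\le k<k_{q+1}$ all iterates after $x^{k_q}$ are non-solutions, since $\{x^{k_q}\}$ lists every solution iterate. Taking $z=x^{k_q}\in S(EP)$ in \eqref{eqfejer} for the non-solution steps and using Lemma \ref{lem4} for the initial step $x^{k_q}\to x^{k_q+1}$, a straightforward induction gives $\|x^k-x^{k_q}\|^2\le\sum_{j\ge k_q}\alpha_j^2$ throughout the block. Hence $d(x^k,S(EP))\le\|x^k-x^{k_q}\|\le\big(\sum_{j\ge k_q}\alpha_j^2\big)^{1/2}$. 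Because $\{x^{k_q}\}$ is infinite, the block-starting indices satisfy $k_q\to\infty$, so the tail of the convergent series $\sum\alpha_j^2$ tends to $0$; therefore $d(x^k,S(EP))\to 0$.

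For part (b), assume $\{x^{k_q}\}$ is finite, so there is $K$ with $x^k\notin S(EP)$ for all $k\ge K$; then \eqref{eqfejer} holds for every $k\ge K$ and every $z\in S(EP)$. As $\sum\alpha_k^2<\infty$, the sequence $\{\|x^k-z\|\}$ converges and $\{x^k\}$ is bounded. Summing \eqref{eqfejer} while using $\sum\alpha_k=\infty$ forces $\liminf_k\langle g^k,x^k-z\rangle=0$ (compare Lemma \ref{lem5}(b)), so along some subsequence $\langle g^{k_i},x^{k_i}-z\rangle\to 0$; passing to a further subsequence we may assume $x^{k_i}\to\bar x\in C$. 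It remains to identify $\bar x$ as a solution, and then \eqref{eqfejer} with $z=\bar x$ shows $\{\|x^k-\bar x\|\}$ converges, while a subsequence tends to $0$, so the whole sequence converges to $\bar x\in S(EP)$.

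The decisive and hardest step is showing $\bar x\in S(EP)$. Arguing by contradiction, if $\bar x\notin S(EP)$ then, exactly as in the first paragraph, $f(\bar x,z)<0$, so $z$ lies in the open convex set $L_f(\bar x)$ and $\overline B(z,\rho)\subset L_f(\bar x)$ for some $\rho>0$. The main obstacle is to upgrade this single strict inequality at $\bar x$ to a uniform statement along the subsequence: using the joint upper semicontinuity of $f$ from (A1) together with compactness of $\{\bar x\}\times\overline B(z,\rho)$, one obtains $r>0$ such that $B(z,\rho)\subset L_f(x)$ whenever $\|x-\bar x\|<r$, and in particular $B(z,\rho)\subset L_f(x^{k_i})$ for $i$ large. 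Lemma \ref{lem5}(a) then gives $\langle g^{k_i},x^{k_i}-z\rangle>\rho$, contradicting $\langle g^{k_i},x^{k_i}-z\rangle\to 0$. I expect this upper-semicontinuity uniformization to be the technically delicate point, whereas the remaining steps are routine once \eqref{eqfejer} is in hand.
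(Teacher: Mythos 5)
Your proposal is correct and follows essentially the same route as the paper: the same Fej\'er-type estimate obtained from Proposition \ref{prop1} together with the pseudo/para-monotonicity argument showing $f(x^k,z)<0$ at non-solution iterates, the same block argument with $z=x^{k_q}$ and tails of $\sum\alpha_k^2$ for part (a), and for part (b) the same sequence of steps (boundedness, $\liminf\langle g^k,x^k-z\rangle=0$ via Lemma \ref{lem5}(b), identification of the cluster point via joint upper semicontinuity and Lemma \ref{lem5}(a), then Fej\'er monotonicity to get full convergence). The only cosmetic difference is that you run the identification step as a direct contradiction with $\bar x\notin S(EP)$, whereas the paper first proves $f(\bar x,x^*)=0$ and then invokes paramonotonicity; these are equivalent.
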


\begin{proof} Let $x^*$ be an arbitrary point of $S(EP)$.
 If $ f(x^*,x^k) \geq 0,$ then
by pseudomonotonicity,
$f(x^k, x^*) \leq 0.$
If $f(x^k,x^*)=0$, then again by pseudomonotonicity, we obtain $f(x^*,x^k) = 0.$  Moreover, by paramonotonicity of $f$ on $C$ w.r.t $S(EP)$, we can say that $x^k$ is also a solution of (EP).

We consider two cases.
\begin{itemize}
\item[(a)] \textbf{The sequence $\{x^{k_q}\}$ is infinite.} By Proposition \ref{prop1},
\begin{equation}
\|x^{k+1}-z\|^2 \leq \|x^k -z\|^2 +2\alpha_k \langle g^k, z-x^k\rangle +\alpha_k^2\nonumber
\end{equation}
By choosing $z=x^k_q$, we obtain
\begin{equation}
\|x^{k+1}-x^{k_q}\|^2 \leq \|x^k -x^{k_q}\|^2 +2\alpha_k \langle g^k, x^{k_q}-x^k\rangle +\alpha_k^2\label{eq14a}
\end{equation}
Now, for $k$ such that $k_q<k<k_{q+1}$, $x^k$ does not belong to $S(EP)$. Then, for $k_q<k<k_{q+1}$, $f(x^k,x^{k_q})<0$ , which means that $x^{k_q}\in L_f(x^k)$. Therefore,
\begin{equation}
\langle g^k, x^{k_q}-x^k \rangle <0 \quad \forall k_q<k<k_{q+1}.\label{eq15a}
\end{equation}
From (\ref{eq14a}) and (\ref{eq15a}), for $k_q\leq k<k_{q+1}$, it follows that
\begin{equation}\label{eqM16}
\|x^k-x^{k_q}\|^2 \leq \sum_{i=k_q}^k \alpha_i^2\leq \sum_{i=k_q}^{\infty} \alpha_i^2.
\end{equation}
Moreover, since $x^{k_q}\in S(EP)$, we have $d^2(x^k, S(EP))\leq \|x^k-x^{k_q}\|^2$. In addition, by the assumption \\$\lim_{q\rightarrow \infty} \sum_{i=k_q}^{\infty} \alpha_i^2 < +\infty$, it follows from (\ref{eqM16}) that,
\begin{equation}
\lim_{k \rightarrow \infty} d(x^k, S(EP)=0.
\end{equation}
\vskip1cm
\item[(b)]\textbf{The sequence $\{x^{k_q}\}$ is finite}. Then, there exists $k_0$ such that for $k\geq k_0$, we have $x^k \not\in S(EP)$. Then
$f(x^k,x^*)<0.$
In this case, we divide our proof into four steps.

\textbf{Step 1:}$\{\|x^k-x^*\|\}$ is convergent and therefore $\{x^k\}$ is bounded.
\\Indeed, $f(x^k,x^*)<0$ for $k\geq k_0$, which means that  $x^*\in L_f(x^k)$ for $k\geq k_0$. Thus,
\begin{equation}
\langle g^k, x^*-x^k \rangle <0 \quad \forall k\geq k_0.\label{eq15}
\end{equation}
Combining this with (\ref{eq11}) in Proposition \ref{prop1}, we obtain
\begin{equation}
\|x^{k+1}-x^*\|^2 \leq \|x^k -x^*\|^2  +\alpha_k^2.\label{eq16}
\end{equation}
Since $\sum_{k=1}^{\infty} \alpha_k^2 <+\infty$, we can conclude that $\{\|x^k-x^*\|\}$ is convergent and therefore $\{x^k\}$ is bounded.

\textbf{Step 2:} $\liminf_{k \rightarrow \infty} \langle g^k, x^k-x^* \rangle = 0$.
Indeed, thanks to Lemma \ref{lem5}(b), we have
\begin{equation}
\liminf_{k \rightarrow \infty} \langle g^k, x^k-x^* \rangle \leq 0.\label{eq17}
\end{equation}
From (\ref{eq15}) and (\ref{eq17}),
\begin{equation}
\liminf_{k \rightarrow \infty} \langle g^k, x^k-x^* \rangle = 0.\nonumber
\end{equation}

\textbf{Step 3:} Let $\{x^{k_i}\}$ be a subsequence of $\{x^k\}$ such that
\begin{equation}
\lim_{q\rightarrow \infty} \langle g^{k_i}, x^{k_q}-x^* \rangle=\liminf_{k \rightarrow \infty} \langle g^k, x^k-x^* \rangle = 0.\label{eq18}
\end{equation}
Since $\{x^k\}$ is bounded, then $\{x^{k_i}\}$ is bounded too. Let $\overline{x}$ be a limit point of $\{x^{k_i}\}$, without loss of generality, we assume that $\lim_{i\rightarrow \infty} x^{k_i}=\overline{x}$.
\\We now prove that $f(\overline{x}, x^*)=0$.

 Indeed, by pseudomonotonicity of $f$ on $C$, we have
$$f(\overline{x}, x^*)\leq 0.$$
Moreover, we show  that $f(\overline{x}, x^*) = 0.$ In fact, by contradiction,  assume that there exists $a>0$ such that
$$f(\overline{x}, x^*)\leq -a.$$
Then, since $f(., .)$ is upper semicontinuous on $C\times C$,  there exist $\epsilon_1 >0, \epsilon_2 > 0 $ such that  for any $x\in B(\overline{x}, \epsilon_1)$, $y\in B(x^*, \epsilon_2)$ we have
$$f(x, y)\leq -\frac{a}{2}.$$
On the other hand, since $\lim_{i\rightarrow \infty} x^{k_i}=\overline{x}$,  there exists $i_0$ such that  $x^{k_i}$ belongs to $B(\overline{x}, \epsilon_1)$ for every $i\geq i_0$,
So, for $i\geq i_0$ and $y\in B(x^*, \epsilon_2)$,  one has
$$f(x^{k_q}, y)\leq -\frac{a}{2} < 0,$$
which means that $B(x^{*}, \epsilon_2)\subset  L_f(x^{k_q})$. Then, by Lemma \ref{lem5}(a), we have
\begin{equation}
\langle g^{k_q}, x^{k_q}-x^{*}\rangle >\epsilon_2 \ \forall q\geq q_0.\nonumber
\end{equation}
This contracts with (\ref{eq18}). So, $f(\overline{x}, x^*)=0$.

\textbf{Step 4:}   The sequence $\{x^k\} $ converges to a solution of (EP). Indeed, by Step 3,  $f(\overline{x}, x^*)=0$. Since $f$ is pseudomonotone on $C$, we have $f(x^*, \overline{x}) \leq 0$, which together with $x^* \in S(EP)$,  implies  $f(x^*,\overline{x} )=0$.  Then, thanks to the paramonotonicity of $f$,  $\overline{x}$ is also a solution of (EP). By Step 1,  $\{\|x^k-\overline{x}\|^2\}$ is convergent, which together with $\lim_{q\rightarrow \infty} x^{k_q}=\overline{x}$ implies that the whole sequence  $\{x^k\} $ must  converge to the solution $\overline{x}$  of (EP).
\end{itemize}
\end{proof}

\begin{rem} (i) If at each iteration $k$, one can check  that whether $x^k$ is a solution or not yet, then all generated iterates do not belong to the solution set $S(EP)$. In this case  the  sequence $\{x^k\}$ converges to a solution of (EP).

(i) If, in addition,  we  assume that
 $f(x,.)$ is strictly quasiconvex for $x\in C$, that is for every $\langle g,y-x\rangle > 0 \
\Rightarrow f(x,y) > f(x,x) = 0$ whenever $g \in \partial f^*(x,x)$,
   then  the sequence $\{x^k\}$ converges to the unique solution of (EP).

Indeed, if $f(x,.)$ is strictly quasiconvex,  then, since $f(x^k,x^*) \leq 0 = f(x^k,x^k)$, by strictly quasiconvex,   $\langle g^k, x^*-x^k\rangle \leq 0$. thus in virtue of Proposition \ref{prop1}, we obtain
$$\|x^{k+1} -x^*\|^2 \leq \|x^k-x^*\|^2 + \alpha_k^2 \  \forall k.$$
 So the convergence of   $\{x^k\}$ can be proved as in case (b) of the convergence theorem.
 \end{rem}

 \section{A Generalized Variational Inequality and  Computational Experience}
 Consider the following  generalized variational inequality problem
$$
\textnormal{Find } x^*\in C \textnormal{ such that } \langle F(x), \varphi (y) - \varphi (x) \rangle  \geq 0 \quad \forall y\in C, \eqno (GVI)
$$
where $F : C\to \R^n$  is a given operator with dom$F \subseteq C$, and $\varphi : C\to \R^n $  is a vector function  such that  $\langle F(x), \varphi(y)\rangle$ is quasiconvex on $C$ for any fixed $x\in C $.  Generalized variational inequalities  were considered in \cite{Noor1987}, and some solution-existence results were established there. Problem (GVI) can take the form of equilibrium (EP) by taking  $f(x,y):=   \langle F(x), \varphi (y) - \varphi (x) \rangle$.
 Clearly, when $\varphi (y) \equiv y$, Problem (GVI)  becomes  a standard variational inequality one. Note that  because of quasiconvexity of $\varphi$, available methods for variational inequalities as well as those for equilibrium problems with $f$ being convex in its second variable fail to apply to (GVI). Now we consider a typical example of problem (GVI) by taking

\begin{equation}
f(x,y)=\left\langle Ax+b, \frac{A_1y+b_1}{c^Ty+d}-\frac{A_1x+b_1}{c^Tx+d}\right\rangle,
\end{equation}
with $A,A_1\in \R^{n\times n}$, $b,b_1,c\in \R^n$, $d\in \R$ and $C\subset \{x| \quad c^Tx+d >0\}$.

Let $\hat{A}= (dA_1^T-cb_1^T)A$.
%$\begin{equation}
%$f(x,y)\geq 0 \Leftrightarrow \langle \hat{A}x, y-x \rangle \geq 0.
%$\end{equation}
Then, the bifunction $f(x,y)$ is paramonotone if and only if $\hat{A}_1=\frac{1}{2}(\hat{A}+\hat{A}^T)$ is positive semidefinite and $\textnormal{rank}(\hat{A}_1)\leq \textnormal{rank}(\hat{A})$ (see \cite{Ju1998}).

We  apply the  two versions of the Normal-subgradient algorithm in Section 3 to solve this problem. In the first version (denoted by (\textbf{NG1})), we construct the  sequence   without checking the solvability of each obtained iterate. In the second version (denoted by (\textbf{NG2})), we check at each iteration whether  $x^k$ is a solution of $(EP)$ or not  by solving the linear fractional program $\min_{y\in C} f(x^k,y)$. As it is well known, see e.g. \cite{Sch1981}, that affine fractional functions appear in many practical problems in various fields.

We take $C=\left[ 1,3 \right]^n$,  the  entries of $A,A_1,b,b_1,c,d$ are uniformly generated in the interval $\left[0,1\right]$. We tested   the algorithm with  problems of different sizes, each of sizes has hundred instances  We stop the computation of the first version (\textbf{NG1}) at iteration $k$ if either $ g^k=0$ or $err_1:=\|x^k-x^{k+1}\|<10^{-4}$, whereas   for the second version (\textbf{NG2}), at each iteration $k$, we check the solvability of $x^k$ by solving the problem $y^k=\textnormal{argmin}_{y\in C} f(x^k,y),$
and stop the computation at iteration $k$ if $-\min_{y\in C} f(x^k,y) <10^{-3}$. Otherwise,    we stop the computation if  the number of iteration exceeds $2000$. For both versions, we say that a problem is  successfully solved  if  an iterate $x^k$ satisfying \\$err:=-\min_{y\in C} f(x^k,y) <10^{-1}$ has been obtained.  Figure \ref{fig1} shows that the error  goes to 0 as the number of iterations
k goes to $+\infty$. The  computational results are shown in Table \ref{tab1} for (\textbf{NG1}) and Table \ref{tab2} for (\textbf{NG2}), where the  number of successfully solved problems as well as the  time and  error in average  are reported. 
\begin{center}
\begin{figure}
\centering 
\includegraphics[scale=0.35]{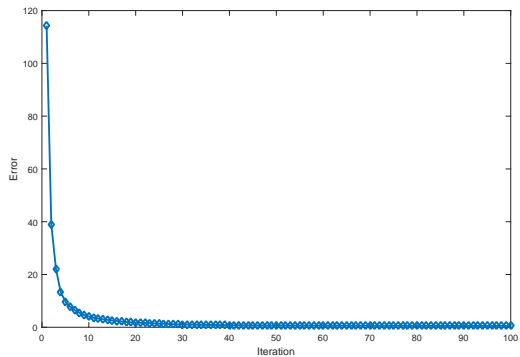}
\caption{Behavior of the error}
\label{fig1}
\end{figure}
\end{center}
\begin{table}
\caption{Algorithm (\textbf{NG1}) with $\alpha_k=\dfrac{100}{k+1}$} % title of Table
\centering % used for centering table
\begin{tabular}{c c c c c c } % centered columns (4 columns)
\hline\hline %inserts double horizontal lines
 n &N. prob. & N. succ. prob.  & CPU-times(s)& Error\\ [0.5ex] % inserts table
%heading
\hline % inserts single horizontal line
5&100& 100& 0.026574& 0.000006 \\% inserting body of the table
10&100&  100& 0.277551& 0.000308\\
20&100& 100& 0.582287& 0.001506   \\
50&100& 87& 9.787128& 0.027892  \\
%100&1000& 678& 0.293071 & 0.000000& \\
%100&5& 5084.1 & 188.9860&  \\
%200&5& 10644&467.9653 &\\
\hline %inserts single line
\end{tabular}
\label{tab1} % is used to refer this table in the text
\end{table}
\begin{table}
\caption{Algorithm (\textbf{NG2}) with $\alpha_k=\dfrac{100}{k+1}$} % title of Table
\centering % used for centering table
\begin{tabular}{c c c c c c } % centered columns (4 columns)
\hline\hline %inserts double horizontal lines
 n &N. prob. & N. succ. prob.  & CPU-times(s)& Error\\ [0.5ex] % inserts table
%heading
\hline % inserts single horizontal line
5&100& 100& 0.005953& 0.000004 \\% inserting body of the table
10&100&  100& 0.124109& 0.000066 \\
20&100& 100& 0.527599& 0.000625  \\
50&100& 100& 6.685981& 0.003728  \\
%100&1000& 858& 0.192085& 0.000000& \\
%100&5& 5084.1 & 188.9860&  \\
%200&5& 10644&467.9653 &\\
\hline %inserts single line
\end{tabular}
\label{tab2} % is used to refer this table in the text
\end{table}

%\begin{table}[ht]
%\caption{Algorithm (\textbf{NG2}) with $\alpha_k=\dfrac{100}{k+1}$} % title of Table
%\centering % used for centering table
%\begin{tabular}{c c c c c c c c c c} % centered columns (4 columns)
%\hline\hline %inserts double horizontal lines
% n &N.prob.&CPU-times(s)& Error & N.succ.  & CPU-times(s) succ& Error succ\\ [0.5ex] % inserts table
%%heading
%\hline % inserts single horizontal line
%5&1000& 0.243956& 1.774448 &858 & 0.192085& 0.000000& \\% inserting body of the table
%10&1000& 0.292523&2.578206&  797& 0.201752& 0.000001& \\
%20&1000&0.448945 & 3.336416& 752& 0.29843& 0.000001&   \\
%50& 1000& 0.487671& 3.832197& 725& 0.215580& 0.000000&  \\
%100 & 1000 & 1.006163 & 4.824843& 678& 0.293071 & 0.000000& \\
%%100&5& 5084.1 & 188.9860&  \\
%%200&5& 10644&467.9653 &\\
%\hline %inserts single line
%\end{tabular}
%\label{tab2} % is used to refer this table in the text
%\end{table}

\end{document}